\newtheorem{theorem}{Theorem}[section]
\newtheorem{lemma}{Lemma}[section]
\newtheorem{definition}{Definition}[section]
\newtheorem{example}{Example}[section]
\newtheorem{remark}{Remark}[section]
\numberwithin{equation}{section}
\begin{document}
\date{{\scriptsize Received: , Accepted: .}}
\title[Fixed-Circle Problem]{Fixed-Circle Problem on $S$-Metric Spaces with
a Geometric Viewpoint}
\author[N. Y. \"{O}ZG\"{U}R ]{N\.{I}HAL YILMAZ \"{O}ZG\"{U}R}
\address{Bal\i kesir University\\
Department of Mathematics\\
10145 Bal\i kesir, TURKEY}
\email{nihal@balikesir.edu.tr}
\author[N. TA\c{S}]{N\.{I}HAL TA\c{S}}
\address{Bal\i kesir University\\
Department of Mathematics\\
10145 Bal\i kesir, TURKEY}
\email{nihaltas@balikesir.edu.tr}
\maketitle

\begin{abstract}
Recently, a new geometric approach which is called the fixed-circle problem
has been gained to fixed-point theory. The problem is introduced and studied
using different techniques on metric spaces. In this paper, we consider the
fixed-circle problem on $S$-metric spaces. We investigate existence and
uniqueness conditions for fixed circles of self-mappings on an $S$-metric
space. Some examples of self-mappings having fixed circles are also given.%
\newline
\textbf{Keywords:} Fixed circle, the existence theorem, the uniqueness
theorem, $S$-metric space. \newline
\textbf{MSC(2010):} Primary: 47H10; Secondary: 54H25, 55M20, 37E10.
\end{abstract}




%

%

\section{\textbf{Introduction}}

\label{intro}

It has been extensively studied the existence and uniqueness theorems of
fixed points satisfy some contractive conditions since the time of Stefan
Banach (see \cite{Banach} and \cite{Ciesielski-2007}). Then many authors
have been investigated new fixed point theorems on metric spaces or
generalizations of metric spaces such as $S$-metric spaces. For example,
Sedghi, Shobe and Aliouche obtained the Banach's contraction principle on $S$%
-metric spaces in \cite{Sedghi-2012}. The present authors studied some
generalizations of the Banach's contraction principle on $S$-metric spaces
in \cite{nihal3}. Also they investigated new fixed point theorems for the
following contractive condition (which is called Rhoades' condition \cite%
{Rhoades}) on $S$-metric spaces in \cite{nihal} and \cite{tez}:%
\begin{eqnarray*}
(S25)\text{ \ \ }\mathcal{S}(Tx,Tx,Ty) &<&\max \{\mathcal{S}(x,x,y),\mathcal{%
S}(Tx,Tx,x),\mathcal{S}(Ty,Ty,y), \\
&&\mathcal{S}(Ty,Ty,x),\mathcal{S}(Tx,Tx,y)\}\text{,}
\end{eqnarray*}%
for each $x,y\in X$, $x\neq y$. Then they gave the concept of diameter on $S$%
-metric spaces and obtained a new contractive condition using this notion as
follows \cite{nihal}:%
\begin{equation*}
(S25a)\text{ \ \ }\mathcal{S}(Tx,Tx,Ty)<diam\{U_{x}\cup U_{y}\}\text{,}
\end{equation*}%
for each $x,y\in X$ $(x\neq y)$, where $U_{x}=\{T^{n}x:n\in
\mathbb{N}
\}$, $U_{y}=\{T^{n}y:n\in
\mathbb{N}
\}$, $diam\{U_{x}\}<\infty $ and $diam\{U_{y}\}<\infty $.

Although it has been studied the existence of fixed points of functions on
various metric spaces but there is no study on the existence of fixed
circles. The fixed circle problem arises naturally. There are some examples
of functions with a fixed circle on some special metric spaces. For example,
let $%
\mathbb{C}
$ be an $S$-metric space with the $S$-metric%
\begin{equation*}
\mathcal{S}(z,w,t)=\frac{\left\vert z-t\right\vert +\left\vert
w-t\right\vert }{2}\text{,}
\end{equation*}%
for all $z,w,t\in
\mathbb{C}
$. Let the mapping $T$ be defined as%
\begin{equation*}
Tz=\frac{1}{\overline{z}}\text{,}
\end{equation*}%
for all $z\in
\mathbb{C}
\setminus \{0\}$. The mapping $T$ fixes the unit circle $C_{0,1}^{S}=\left\{
x\in X:\mathcal{S}(x,x,0)=1\right\} $.

Recently, \"{O}zdemir, \.{I}skender and \"{O}zg\"{u}r used new types of
activation functions having a fixed circle for a complex valued neural
network \cite{Ozdemir-2011}. The usage of these types activation functions
lead us to guarantee the existence of fixed points of the complex valued
Hopfield neural network (see \cite{Ozdemir-2011} for more details).

Hence it is important to investigate some fixed-circle theorems on various
metric spaces. In \cite{nihal4}, the present authors obtained some
fixed-circle theorems on metric spaces. They studied some existence theorems
for fixed circles with a geometric interpretation and gave necessary
conditions for the uniqueness of fixed circles on metric spaces. Also they
gave some examples of self-mappings with fixed circles. On the other hand,
in \cite{Ozgur-Aip}, they proved new fixed-circle results and gave to an
application of the obtained results to discontinuity problem and
discontinuous activation functions.

Motivated by the above studies, our aim in this paper is to obtain some
fixed-circle theorems for self-mappings on $S$-metric spaces. In Section \ref%
{sec:1} we recall some definitions, lemmas and basic facts about $S$-metric
spaces. In Section \ref{sec:2} we introduce the notion of a fixed circle on $%
S$-metric spaces. Therefore we obtain some existence and uniqueness theorems
for self-mappings having fixed circles via different techniques. We
investigate the case in which the number of the fixed circles are infinitely
many. Some examples of self-mappings with fixed circles are given with a
geometric viewpoint. Using Mathematica (Wolfram Research, Inc., Mathematica,
Trial Version, Champaign, IL (2016)) we draw some figures.

\section{\textbf{Preliminaries}}

\label{sec:1} In this section we recall some definitions and basic facts
about $S$-metric spaces. At first, we recall the concept of an $S$-metric
space.

\begin{definition}
\label{def1} \cite{Sedghi-2012} Let $X$ be nonempty set and $\mathcal{S}%
:X^{3}\rightarrow \lbrack 0,\infty )$ be a function satisfying the following
conditions for all $x,y,z,a\in X$.

\begin{enumerate}
\item $\mathcal{S}(x,y,z)=0$ if and only if $x=y=z$,

\item $\mathcal{S}(x,y,z)\leq \mathcal{S}(x,x,a)+\mathcal{S}(y,y,a)+\mathcal{%
S}(z,z,a)$.
\end{enumerate}

Then $S$ is called an $S$-metric on $X$ and the pair $(X,\mathcal{S})$ is
called an $S$-metric space.
\end{definition}

The following lemma can be considered the symmetry condition and it is used
in the proof of some theorems.

\begin{lemma}
\label{lem1} \cite{Sedghi-2012} Let $(X,\mathcal{S})$ be an $S$-metric
space. Then%
\begin{equation*}
\mathcal{S}(x,x,y)=\mathcal{S}(y,y,x)\text{.}
\end{equation*}
\end{lemma}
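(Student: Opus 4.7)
The plan is to deduce the symmetry identity $\mathcal{S}(x,x,y)=\mathcal{S}(y,y,x)$ by applying the rectangle-type inequality (axiom 2 of Definition \ref{def1}) twice, each time with a cleverly chosen auxiliary point $a$, and then using axiom 1 to kill the degenerate terms.

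First I would observe that axiom 1 gives the crucial normalization $\mathcal{S}(u,u,u)=0$ for every $u\in X$, since the equality $u=u=u$ trivially holds. This is the lever that will make the inequalities collapse to the desired equality.

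Next, I would apply axiom 2 to the triple $(x,x,y)$ with the auxiliary point chosen to be $a=x$. This yields
\begin{equation*}
\mathcal{S}(x,x,y)\le \mathcal{S}(x,x,x)+\mathcal{S}(x,x,x)+\mathcal{S}(y,y,x)=\mathcal{S}(y,y,x),
\end{equation*}
because the first two terms vanish by the observation above. Then I would repeat the same argument with the roles of $x$ and $y$ swapped: apply axiom 2 to the triple $(y,y,x)$ with $a=y$, obtaining
\begin{equation*}
\mathcal{S}(y,y,x)\le \mathcal{S}(y,y,y)+\mathcal{S}(y,y,y)+\mathcal{S}(x,x,y)=\mathcal{S}(x,x,y).
\end{equation*}
Combining these two inequalities gives the stated equality.

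There is no real obstacle here; the only subtlety is recognizing that the "right" choice of the free point $a$ in axiom 2 is one of the repeated coordinates of the triple, so that two of the three summands on the right-hand side become $\mathcal{S}(x,x,x)$ (or $\mathcal{S}(y,y,y)$) and vanish. Everything else is a one-line computation.
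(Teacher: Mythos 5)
Your proof is correct, and it is exactly the standard argument for this symmetry lemma (the paper itself only cites it from Sedghi--Shobe--Aliouche without reproving it): applying axiom 2 to the triple $(x,x,y)$ with $a=x$ and then to $(y,y,x)$ with $a=y$, using $\mathcal{S}(u,u,u)=0$ from axiom 1, yields the two opposite inequalities. Nothing is missing.
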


The relationships between a metric and an $S$-metric was given in the
following lemma.

\begin{lemma}
\label{lem2} \cite{Hieu} Let $(X,d)$ be a metric space. Then the following
properties are satisfied$:$

\begin{enumerate}
\item $\mathcal{S}_{d}(x,y,z)=d(x,z)+d(y,z)$ for all $x,y,z\in X$ is an $S$%
-metric on $X$.

\item $x_{n}\rightarrow x$ in $(X,d)$ if and only if $x_{n}\rightarrow x$ in
$(X,\mathcal{S}_{d})$.

\item $\{x_{n}\}$ is Cauchy in $(X,d)$ if and only if $\{x_{n}\}$ is Cauchy
in $(X,\mathcal{S}_{d}).$

\item $(X,d)$ is complete if and only if $(X,\mathcal{S}_{d})$ is complete.
\end{enumerate}
\end{lemma}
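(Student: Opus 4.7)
The plan is to verify the four claims in turn, each of which reduces to an elementary manipulation of the defining identity $\mathcal{S}_d(x,y,z)=d(x,z)+d(y,z)$ together with the triangle inequality for $d$.

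For part (1) I would check the two axioms of Definition \ref{def1}. Axiom (1) is immediate: since $d$ is a metric, $d(x,z)+d(y,z)=0$ forces $d(x,z)=d(y,z)=0$, hence $x=y=z$, and the converse is trivial. For axiom (2), the right-hand side expands as
\[
\mathcal{S}_d(x,x,a)+\mathcal{S}_d(y,y,a)+\mathcal{S}_d(z,z,a)=2d(x,a)+2d(y,a)+2d(z,a),
\]
so the required bound becomes $d(x,z)+d(y,z)\leq 2d(x,a)+2d(y,a)+2d(z,a)$. Adding the two triangle inequalities $d(x,z)\leq d(x,a)+d(a,z)$ and $d(y,z)\leq d(y,a)+d(a,z)$ gives $d(x,z)+d(y,z)\leq d(x,a)+d(y,a)+2d(a,z)$, which is dominated by the doubled right-hand side. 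This is essentially the only calculation in the whole lemma.

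For parts (2) and (3) the key observation is the identity
\[
\mathcal{S}_d(x_n,x_n,y)=2d(x_n,y),
\]
which, applied with $y=x$ or with $y=x_m$, identifies the notions of convergence and of the Cauchy property in $(X,d)$ and in $(X,\mathcal{S}_d)$ up to a harmless factor of $2$; the equivalences then read off directly from the $\varepsilon$--$N$ definitions. Part (4) is an immediate consequence: given a $d$-Cauchy sequence, item (3) transports it to an $\mathcal{S}_d$-Cauchy sequence, $\mathcal{S}_d$-completeness yields a limit, and item (2) converts that limit back to a $d$-limit; the reverse implication is symmetric.

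The only possible pitfall is making sure that the symmetric form $d(x,z)+d(y,z)$ is still compatible with the triangle-type axiom (2), since the left-hand side contains contributions from two distinct points. The doubling on the right-hand side of axiom (2) supplies exactly the slack needed, so no obstruction arises.
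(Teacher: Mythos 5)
Your verification is correct. The paper gives no proof of this lemma---it is quoted directly from \cite{Hieu}---and your argument is the standard one: the only substantive check is axiom (2) of Definition \ref{def1}, which your doubled triangle inequality handles correctly, and parts (2)--(4) follow at once from the identity $\mathcal{S}_{d}(x_{n},x_{n},y)=2d(x_{n},y)$ applied to the usual definitions of convergence and Cauchy sequences in an $S$-metric space.
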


The metric $\mathcal{S}_{d}$ was called as the $S$-metric generated by $d$
\cite{nihal2}. We know some examples of an $S$-metric which is not generated
by any metric (see \cite{Hieu}, \cite{nihal2} and \cite{tez} for more
details).

The notions of an open ball, a closed ball and diameter were introduced on $%
S $-metric spaces as the following definitions.

\begin{definition}
\label{def2} \cite{Sedghi-2012} Let $(X,\mathcal{S})$ be an $S$-metric
space. The open ball $B_{S}(x_{0},r)$ and closed ball $B_{S}[x_{0},r]$ with
a center $x_{0}$ and a radius $r$ are defined by%
\begin{equation*}
B_{S}(x_{0},r)=\{x\in X:\mathcal{S}(x,x,x_{0})<r\}
\end{equation*}%
and%
\begin{equation*}
B_{S}[x_{0},r]=\{x\in X:\mathcal{S}(x,x,x_{0})\leq r\}\text{,}
\end{equation*}%
for $r>0$ and $x_{0}\in X$.
\end{definition}

\begin{definition}
\label{def5} \cite{nihal} Let $(X,\mathcal{S})$ be an $S$-metric space and $%
A $ be a nonempty subset of $X$. The diameter of $A$ is defined by%
\begin{equation*}
diam\{A\}=sup\{\mathcal{S}(x,x,y):x,y\in A\}\text{.}
\end{equation*}%
If $A$ is $S$-bounded, then we will write $diam\{A\}<\infty $.
\end{definition}

Now we define the notion of a circle on an $S$-metric space.

\begin{definition}
\label{def3} Let $(X,\mathcal{S})$ be an $S$-metric space and $x_{0}\in X$, $%
r\in (0,\infty )$. We define the circle centered at $x_{0}$ with radius $r$
as%
\begin{equation*}
C_{x_{0},r}^{S}=\{x\in X:\mathcal{S}(x,x,x_{0})=r\}\text{.}
\end{equation*}
\end{definition}

\section{\textbf{Some Fixed-Circle Theorems on $S$-Metric Spaces}}

\label{sec:2} In this section we introduce the notion of a fixed circle on
an $S$-metric space. Then we investigate some existence and uniqueness
theorems for self-mappings having fixed circles.

\begin{definition}
\label{def4} Let $(X,\mathcal{S})$ be an $S$-metric space, $%
C_{x_{0},r}^{S}=\{x\in X:\mathcal{S}(x,x,x_{0})=r\}$ be a circle on $X$ and $%
T:X\rightarrow X$ be a self-mapping. If $Tx=x$ for all $x\in C_{x_{0},r}^{S}$
then the circle $C_{x_{0},r}^{S}$ is said to be a fixed circle of $T$.
\end{definition}

\subsection{\textbf{The existence of fixed circles}}

In this section we obtain some existence theorems for fixed circles of
self-mappings.

\begin{theorem}
\label{thm1} Let $(X,\mathcal{S})$ be an $S$-metric space and $%
C_{x_{0},r}^{S}$ be any circle on $X$. Let us define the mapping
\begin{equation}
\varphi :X\rightarrow \lbrack 0,\infty )\text{, }\varphi (x)=\mathcal{S}%
(x,x,x_{0})\text{,}  \label{phi mapping}
\end{equation}%
for all $x\in X$. If there exists a self-mapping $T:X\rightarrow X$
satisfying%
\begin{equation}
\mathcal{S}(x,x,Tx)\leq \varphi (x)+\varphi (Tx)-2r  \label{thm1_S1}
\end{equation}%
and%
\begin{equation}
\mathcal{S}(x,x,Tx)+\mathcal{S}(Tx,Tx,x_{0})\leq r\text{,}  \label{thm1_S2}
\end{equation}%
for all $x\in C_{x_{0},r}^{S}$, then $C_{x_{0},r}^{S}$ is a fixed circle of $%
T$.
\end{theorem}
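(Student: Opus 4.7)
The plan is to show that the two hypotheses (\ref{thm1_S1}) and (\ref{thm1_S2}), when evaluated on a point $x$ of the circle $C_{x_{0},r}^{S}$, force $\mathcal{S}(x,x,Tx)=0$, whence $Tx=x$ by the non-degeneracy axiom (1) in Definition \ref{def1}.

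Fix $x\in C_{x_{0},r}^{S}$, so that $\varphi (x)=\mathcal{S}(x,x,x_{0})=r$. Substituting this value into (\ref{thm1_S1}) and rewriting $\varphi (Tx)=\mathcal{S}(Tx,Tx,x_{0})$, I would obtain
\begin{equation*}
\mathcal{S}(x,x,Tx)\leq \mathcal{S}(Tx,Tx,x_{0})-r,
\end{equation*}
i.e.\ $\mathcal{S}(x,x,Tx)+r\leq \mathcal{S}(Tx,Tx,x_{0})$. On the other hand, (\ref{thm1_S2}) rearranges to
\begin{equation*}
\mathcal{S}(Tx,Tx,x_{0})\leq r-\mathcal{S}(x,x,Tx).
\end{equation*}

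Chaining these two bounds gives $\mathcal{S}(x,x,Tx)+r\leq r-\mathcal{S}(x,x,Tx)$, hence $2\mathcal{S}(x,x,Tx)\leq 0$. Since $\mathcal{S}$ is non-negative, this forces $\mathcal{S}(x,x,Tx)=0$, and axiom (1) in Definition \ref{def1} then yields $Tx=x$. Because $x$ was an arbitrary point of $C_{x_{0},r}^{S}$, the circle is fixed by $T$.

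There is no real obstacle here: the argument is a one-line algebraic sandwich, and the only subtlety is recognizing that $\varphi (x)=r$ on the circle turns the (otherwise slack) inequality (\ref{thm1_S1}) into a lower bound for $\mathcal{S}(Tx,Tx,x_{0})$ that directly contradicts the upper bound coming from (\ref{thm1_S2}) unless $\mathcal{S}(x,x,Tx)=0$. Lemma \ref{lem1} is not needed for this proof, although it could be used cosmetically to rewrite $\mathcal{S}(Tx,Tx,x_{0})$ as $\mathcal{S}(x_{0},x_{0},Tx)$ if one prefers a more symmetric presentation.
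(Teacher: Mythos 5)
Your proof is correct, and it takes a genuinely different (in fact shorter) route than the paper's. The paper never uses the relation $\varphi(x)=\mathcal{S}(x,x,x_{0})=r$ for $x$ on the circle: starting from (\ref{thm1_S1}) it bounds $\mathcal{S}(x,x,x_{0})$ from above by $2\mathcal{S}(x,x,Tx)+\mathcal{S}(Tx,Tx,x_{0})$ via the triangle inequality of Definition \ref{def1} together with the symmetry Lemma \ref{lem1}, and then applies (\ref{thm1_S2}) to the resulting quantity $2\mathcal{S}(x,x,Tx)+2\mathcal{S}(Tx,Tx,x_{0})-2r$ to conclude $\mathcal{S}(x,x,Tx)\leq 0$. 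You instead substitute $\varphi(x)=r$ directly, turning (\ref{thm1_S1}) into the lower bound $\mathcal{S}(Tx,Tx,x_{0})\geq \mathcal{S}(x,x,Tx)+r$ and chaining it against the upper bound $\mathcal{S}(Tx,Tx,x_{0})\leq r-\mathcal{S}(x,x,Tx)$ coming from (\ref{thm1_S2}); this dispenses with both the triangle inequality and Lemma \ref{lem1}, and it makes the geometric content of the two hypotheses (no point of the circle is sent strictly inside or strictly outside, cf.\ Remark \ref{rem1}) completely transparent. The only thing the paper's longer argument buys is that it shows the two conditions force $Tx=x$ at \emph{any} point where they both hold, not merely at points of $C_{x_{0},r}^{S}$, but that extra generality is not needed for the statement.
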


\begin{proof}
Let $x\in C_{x_{0},r}^{S}$. Then using the conditions (\ref{thm1_S1}), (\ref%
{thm1_S2}), Lemma \ref{lem1} and the triangle inequality, we get%
\begin{eqnarray*}
\mathcal{S}(x,x,Tx) &\leq &\varphi (x)+\varphi (Tx)-2r \\
&=&\mathcal{S}(x,x,x_{0})+\mathcal{S}(Tx,Tx,x_{0})-2r \\
&\leq &\mathcal{S}(x,x,Tx)+\mathcal{S}(x,x,Tx)+\mathcal{S}(Tx,Tx,x_{0})+%
\mathcal{S}(Tx,Tx,x_{0})-2r \\
&=&2\mathcal{S}(x,x,Tx)+2\mathcal{S}(Tx,Tx,x_{0})-2r \\
&\leq &2r-2r=0
\end{eqnarray*}%
and so%
\begin{equation*}
\mathcal{S}(x,x,Tx)=0\text{,}
\end{equation*}%
which implies $Tx=x$. Consequently, $C_{x_{0},r}^{S}$ is a fixed circle of $%
T $.
\end{proof}

\begin{remark}
\label{rem1} $1)$ Notice that the condition $($\ref{thm1_S1}$)$ guarantees
that $Tx$ is not in the interior of the circle $C_{x_{0},r}^{S}$ for $x\in
C_{x_{0},r}^{S}$. Similarly the condition $($\ref{thm1_S2}$)$ guarantees
that $Tx$ is not exterior of the circle $C_{x_{0},r}^{S}$ for $x\in
C_{x_{0},r}^{S}$. Hence $Tx\in C_{x_{0},r}^{S}$ for each $x\in
C_{x_{0},r}^{S}$ and so we get $T(C_{x_{0},r}^{S})\subset C_{x_{0},r}^{S}$.

$2)$ If an $S$-metric is generated by any metric $d$, then Theorem \ref{thm1}
can be used on the corresponding metric space.
\end{remark}

Now we give an example of a self-mapping with a fixed circle.

\begin{example}
\label{exm6} Let $X=%
\mathbb{R}
$ and the function $\mathcal{S}:X^{3}\rightarrow \lbrack 0,\infty )$ be
defined by%
\begin{equation*}
\mathcal{S}(x,y,z)=\left\vert x-z\right\vert +\left\vert y-z\right\vert
\text{,}
\end{equation*}%
for all $x,y,z\in
\mathbb{R}
$ \cite{Sedghi-2014}. Then $(X,\mathcal{S})$ is called the usual $S$-metric
space. This $S$-metric is generated by the usual metric on $%
\mathbb{R}
$. Let us consider the circle $C_{0,2}^{S}$ and define the self-mapping $%
T_{1}:%
\mathbb{R}
\rightarrow
\mathbb{R}
$ as%
\begin{equation*}
T_{1}x=\left\{
\begin{array}{ccc}
x & ; & x\in \{-1,1\} \\
10 & ; & \text{otherwise}%
\end{array}%
\right. \text{,}
\end{equation*}%
for all $x\in
\mathbb{R}
$. Then the self-mapping $T_{1}$ satisfies the conditions $($\ref{thm1_S1}$)$
and $($\ref{thm1_S2}$)$. Hence $C_{0,2}^{S}=\{-1,1\}$ is a fixed circle of $%
T_{1}$.

Notice that $C_{\frac{9}{2},11}^{S}=\{-1,10\}$ is another fixed circle of $%
T_{1}$ and so the fixed circle is not unique for a giving self-mapping.

On the other hand, if we consider the usual metric $d$ on $%
\mathbb{R}
$ then we obtain $C_{0,2}=\{-2,2\}$. The circle $C_{0,2}$ is not a fixed
circle of $T_{1}$.
\end{example}

\begin{example}
\label{exm13} Let $X=%
\mathbb{R}
^{2}$ and the function $\mathcal{S}:X^{3}\rightarrow \lbrack 0,\infty )$ be
defined by%
\begin{equation*}
\mathcal{S}(x,y,z)=\sum\limits_{i=1}^{2}\left( \left\vert
x_{i}-z_{i}\right\vert +\left\vert x_{i}+z_{i}-2y_{i}\right\vert \right)
\text{,}
\end{equation*}%
for all $x=(x_{1},x_{2})$, $y=(y_{1},y_{2})$ and $z=(z_{1},z_{2})$. Then it
can be easily seen that $\mathcal{S}$ is an $S$-metric on $%
\mathbb{R}
^{2}$, which is not generated by any metric, and the pair $\left(
\mathbb{R}
^{2},\mathcal{S}\right) $ is an $S$-metric space.

Let us consider the unit circle $C_{0,1}^{S}$ and define the self-mapping $%
T_{2}:%
\mathbb{R}
\rightarrow
\mathbb{R}
$ as%
\begin{equation*}
T_{2}x=\left\{
\begin{array}{ccc}
x & ; & x\in C_{0,1}^{S} \\
(1,0) & ; & \text{otherwise}%
\end{array}%
\right. \text{,}
\end{equation*}%
for all $x\in
\mathbb{R}
^{2}$. Then the self-mapping $T_{2}$ satisfies the conditions $($\ref%
{thm1_S1}$)$ and $($\ref{thm1_S2}$)$. Therefore $C_{0,1}^{S}$ is a fixed
circle of $T_{2}$ as shown in Figure \ref{fig:5}.
\end{example}

\begin{figure}[t]
\centering
\includegraphics[width=.9\linewidth]{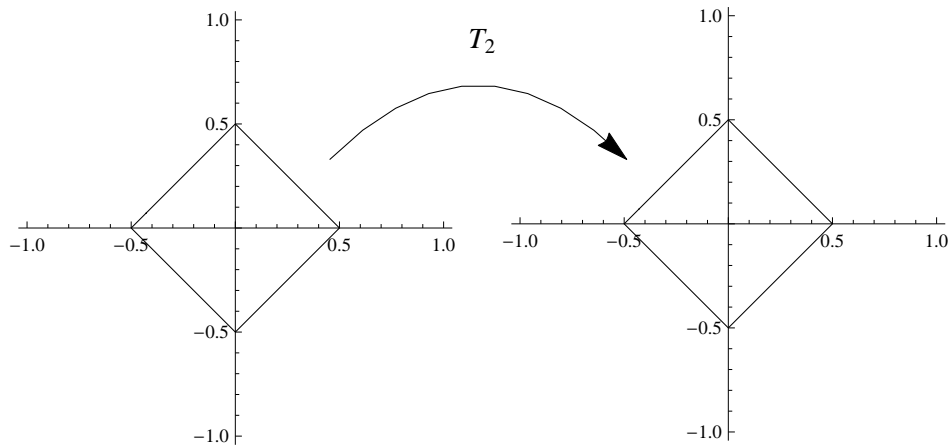}
\caption{\Small The fixed circle of $T_{2}$.}
\label{fig:5}
\end{figure}

In the following example, we give an example of a self-mapping which
satisfies the condition $($\ref{thm1_S1}$)$ and does not satisfy the
condition $($\ref{thm1_S2}$)$.

\begin{example}
\label{exm7} Let $X=%
\mathbb{R}
$ and the function $\mathcal{S}:X^{3}\rightarrow \lbrack 0,\infty )$ be
defined by%
\begin{equation*}
\mathcal{S}(x,y,z)=\left\vert x-z\right\vert +\left\vert x+z-2y\right\vert
\text{,}
\end{equation*}%
for all $x,y,z\in
\mathbb{R}
$ \cite{nihal2}. Then $\mathcal{S}$ is an $S$-metric which is not generated
by any metric and $(X,\mathcal{S})$ is an $S$-metric space. Let us consider
the circle $C_{0,3}^{S}$ and define the self-mapping $T_{3}:%
\mathbb{R}
\rightarrow
\mathbb{R}
$ as%
\begin{equation*}
T_{3}x=\left\{
\begin{array}{ccc}
-\dfrac{7}{2} & ; & x=-\dfrac{3}{2} \\
\dfrac{7}{2} & ; & x=\dfrac{3}{2} \\
7 & ; & \text{otherwise}%
\end{array}%
\right. \text{,}
\end{equation*}%
for all $x\in
\mathbb{R}
$. Then the self-mapping $T_{3}$ satisfies the condition $($\ref{thm1_S1}$)$
but does not satisfy the condition $($\ref{thm1_S2}$)$. Clearly $T_{3}$ does
not fix the circle $C_{0,3}^{S}$.
\end{example}

In the following example, we give an example of a self-mapping which
satisfies the condition $($\ref{thm1_S2}$)$ and does not satisfy the
condition $($\ref{thm1_S1}$)$.

\begin{example}
\label{exm8} Let $(X,\mathcal{S})$ be an $S$-metric space, $C_{x_{0},r}^{S}$
be a circle on $X$ and the self-mapping $T_{4}:X\rightarrow X$ be defined as%
\begin{equation*}
T_{4}x=x_{0}\text{,}
\end{equation*}%
for all $x\in X$. Then the self-mapping $T_{4}$ satisfies the condition $($%
\ref{thm1_S2}$)$ but does not satisfy the condition $($\ref{thm1_S1}$)$.
Clearly $T_{4}$ does not fix a circle $C_{x_{0},r}^{S}$.
\end{example}

Now we give another existence theorem for fixed circles.

\begin{theorem}
\label{thm2} Let $(X,\mathcal{S})$ be an $S$-metric space and $%
C_{x_{0},r}^{S}$ be any circle on $X$. Let the mapping $\varphi $ be defined
as $($\ref{phi mapping}$)$. If there exists a self-mapping $T:X\rightarrow X$
satisfying%
\begin{equation}
\mathcal{S}(x,x,Tx)\leq \varphi (x)-\varphi (Tx)  \label{thm2_S1}
\end{equation}%
and%
\begin{equation}
h\mathcal{S}(x,x,Tx)+\mathcal{S}(Tx,Tx,x_{0})\geq r\text{,}  \label{thm2_S2}
\end{equation}%
for all $x\in C_{x_{0},r}^{S}$ and some $h\in \lbrack 0,1),$ then $%
C_{x_{0},r}^{S}$ is a fixed circle of $T$.
\end{theorem}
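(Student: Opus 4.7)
The plan is to follow the same template as in Theorem \ref{thm1}: take an arbitrary $x\in C_{x_0,r}^{S}$, substitute $\varphi(x)=r$ into conditions (\ref{thm2_S1}) and (\ref{thm2_S2}), and combine the two inequalities to force $\mathcal{S}(x,x,Tx)=0$, which by Definition \ref{def1}(1) gives $Tx=x$.

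First I would fix $x\in C_{x_0,r}^{S}$, so that $\varphi(x)=\mathcal{S}(x,x,x_0)=r$. Then condition (\ref{thm2_S1}) rewrites as
\begin{equation*}
\mathcal{S}(x,x,Tx)+\mathcal{S}(Tx,Tx,x_0)\leq r,
\end{equation*}
since $\varphi(Tx)=\mathcal{S}(Tx,Tx,x_0)$. Note that the non-negativity of $\mathcal{S}$ already forces $\varphi(Tx)\leq r$ at this point, i.e.\ $Tx$ lies in the closed ball $B_S[x_0,r]$. Condition (\ref{thm2_S2}) then says
\begin{equation*}
h\,\mathcal{S}(x,x,Tx)+\mathcal{S}(Tx,Tx,x_0)\geq r,
\end{equation*}
which is a lower bound preventing $Tx$ from being strictly inside that ball unless $x$ is already fixed.

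Subtracting the second inequality from the first yields $(1-h)\mathcal{S}(x,x,Tx)\leq 0$. Since $h\in[0,1)$ we have $1-h>0$, so $\mathcal{S}(x,x,Tx)\leq 0$, and by non-negativity $\mathcal{S}(x,x,Tx)=0$. By Definition \ref{def1}(1) this gives $Tx=x$. As $x\in C_{x_0,r}^{S}$ was arbitrary, $C_{x_0,r}^{S}$ is a fixed circle of $T$.

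I do not anticipate any real obstacle: the proof is a two-line algebraic combination of (\ref{thm2_S1}) and (\ref{thm2_S2}), and—unlike Theorem \ref{thm1}—no use of the $S$-metric triangle inequality or of Lemma \ref{lem1} is needed. The only subtle point worth flagging in the write-up is why $h<1$ is essential: if $h=1$ were allowed, the two inequalities would only give $\mathcal{S}(x,x,Tx)+\mathcal{S}(Tx,Tx,x_0)=r$, which is consistent with $Tx$ being any point on the circle, not necessarily $x$ itself.
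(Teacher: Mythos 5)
Your proof is correct and takes essentially the same route as the paper: both combine (\ref{thm2_S1}) and (\ref{thm2_S2}) at a point $x\in C_{x_{0},r}^{S}$ to obtain $(1-h)\mathcal{S}(x,x,Tx)\leq 0$, the paper merely phrasing this as a contradiction with $Tx\neq x$ rather than as a direct deduction. Your side remarks --- that neither the triangle inequality nor Lemma \ref{lem1} is needed here (unlike in Theorem \ref{thm1}), and that $h=1$ would only yield $\mathcal{S}(x,x,Tx)+\mathcal{S}(Tx,Tx,x_{0})=r$ --- are both accurate.
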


\begin{proof}
Let $x\in C_{x_{0},r}^{S}$. On the contrary, assume that $x\neq Tx$. Then
using the conditions (\ref{thm2_S1}) and (\ref{thm2_S2}), we obtain%
\begin{eqnarray*}
\mathcal{S}(x,x,Tx) &\leq &\varphi (x)-\varphi (Tx) \\
&=&\mathcal{S}(x,x,x_{0})-\mathcal{S}(Tx,Tx,x_{0}) \\
&=&r-\mathcal{S}(Tx,Tx,x_{0}) \\
&\leq &h\mathcal{S}(x,x,Tx)+\mathcal{S}(Tx,Tx,x_{0})-\mathcal{S}(Tx,Tx,x_{0})
\\
&=&h\mathcal{S}(x,x,Tx)\text{,}
\end{eqnarray*}%
which is a contradiction since $h\in \lbrack 0,1)$. Hence we get $Tx=x$ and $%
C_{x_{0},r}^{S}$ is a fixed circle of $T$.
\end{proof}

\begin{remark}
\label{rem3} $1)$ Notice that the condition $($\ref{thm2_S1}$)$ guarantees
that $Tx$ is not in the exterior of the circle $C_{x_{0},r}^{S}$ for $x\in
C_{x_{0},r}^{S}$. Similarly, the condition $($\ref{thm2_S2}$)$ guarantees
that $Tx$ should be lie on or exterior or interior of the circle $%
C_{x_{0},r}^{S}$ for $x\in C_{x_{0},r}^{S}$. Hence $Tx$ should be lie on or
interior of the circle $C_{x_{0},r}^{S}$.

$2)$ If an $S$-metric is generated by any metric $d$, then Theorem \ref{thm2}
can be used on the corresponding metric space.
\end{remark}

Now we give some examples of self-mappings which have a fixed-circle.

\begin{example}
\label{exm9} Let $X=%
\mathbb{R}
$ and $(X,\mathcal{S})$ be the usual $S$-metric space. Let us consider the
circle $C_{1,2}^{S}=\{0,2\}$ and define the self-mapping $T_{5}:%
\mathbb{R}
\rightarrow
\mathbb{R}
$ as%
\begin{equation*}
T_{5}x=\left\{
\begin{array}{ccc}
e^{x}-1 & ; & x=0 \\
2x-2 & ; & x=2 \\
3 & ; & \text{otherwise}%
\end{array}%
\right. \text{,}
\end{equation*}%
for all $x\in
\mathbb{R}
$. Then the self-mapping $T_{5}$ satisfies the conditions $($\ref{thm2_S1}$)$
and $($\ref{thm2_S2}$)$. Hence $C_{1,2}^{S}$ is a fixed circle of $T_{5}$.

On the other hand, if we consider the usual metric $d$ on $%
\mathbb{R}
$ then we have $C_{1,2}=\{-1,3\}$. The circle $C_{1,2}$ is not a fixed
circle of $T_{5}$. But $C_{1,1}=\{0,2\}$ is a fixed circle of $T_{5}$ on $%
(X,d)$.
\end{example}

\begin{figure}[t]
\centering
\includegraphics[width=.9\linewidth]{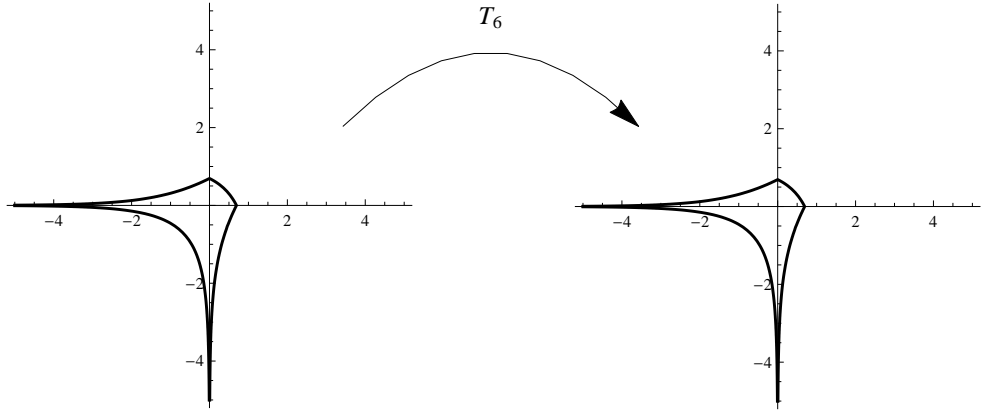}
\caption{\Small The fixed circle of $T_{6}$.}
\label{fig:6}
\end{figure}

\begin{example}
\label{exm14} Let $X=%
\mathbb{R}
^{2}$ and the function $\mathcal{S}:X^{3}\rightarrow \lbrack 0,\infty )$ be
defined by%
\begin{equation*}
\mathcal{S}(x,y,z)=\sum\limits_{i=1}^{2}\left( \left\vert
e^{x_{i}}-e^{z_{i}}\right\vert +\left\vert
e^{x_{i}}+e^{z_{i}}-2e^{y_{i}}\right\vert \right) \text{,}
\end{equation*}%
for all $x=(x_{1},x_{2})$, $y=(y_{1},y_{2})$ and $z=(z_{1},z_{2})$. Then it
can be easily checked that $\mathcal{S}$ is an $S$-metric on $%
\mathbb{R}
^{2}$, which is not generated by any metric, and the pair $\left(
\mathbb{R}
^{2},\mathcal{S}\right) $ is an $S$-metric space.

Let us consider the circle $C_{x_{0},r}^{S}$ centered at $x_{0}=(0,0)$ with
radius $r=2$ and define the self-mapping $T_{6}:%
\mathbb{R}
\rightarrow
\mathbb{R}
$ as%
\begin{equation*}
T_{6}x=\left\{
\begin{array}{ccc}
x & ; & x\in C_{0,2}^{S} \\
(\ln 2,0) & ; & \text{otherwise}%
\end{array}%
\right. \text{,}
\end{equation*}%
for all $x\in
\mathbb{R}
^{2}$. Then the self-mapping $T_{6}$ satisfies the conditions $($\ref%
{thm2_S1}$)$ and $($\ref{thm2_S2}$)$. Therefore $C_{0,2}^{S}$ is the fixed
circle of $T_{6}$ as shown in Figure \ref{fig:6}.
\end{example}

In the following example, we give an example of a self-mapping which
satisfies the condition $($\ref{thm2_S1}$)$ and does not satisfy the
condition $($\ref{thm2_S2}$)$.

\begin{example}
\label{exm10} Let $(X,\mathcal{S})$ be an $S$-metric space, $C_{x_{0},r}^{S}$
be a circle on $X$ and the self-mapping $T_{7}:X\rightarrow X$ be defined as%
\begin{equation*}
T_{7}x=x_{0}\text{,}
\end{equation*}%
for all $x\in X$. Then the self-mapping $T_{7}$ satisfies the condition $($%
\ref{thm2_S1}$)$ but does not satisfy the condition $($\ref{thm2_S2}$)$. It
can be easily seen that $T_{7}$ does not fix a circle $C_{x_{0},r}^{S}$.
\end{example}

In the following example, we give an example of a self-mapping which
satisfies the condition $($\ref{thm2_S2}$)$ and does not satisfy the
condition $($\ref{thm2_S1}$)$.

\begin{example}
\label{exm11} Let $X=%
\mathbb{R}
$ and $(X,\mathcal{S})$ be an $S$-metric space with $S$-metric defined as in
Example \ref{exm7}. Let us consider the unit circle $C_{0,1}^{S}$ and define
the self-mapping $T_{8}:%
\mathbb{R}
\rightarrow
\mathbb{R}
$ as%
\begin{equation*}
T_{8}x=1\text{,}
\end{equation*}%
for all $x\in
\mathbb{R}
$. Then the self-mapping $T_{8}$ satisfies the condition $($\ref{thm2_S2}$)$
but does not satisfy the condition $($\ref{thm2_S1}$)$. It can be easily
shown that $T_{8}$ does not fix the unit circle $C_{0,1}^{S}$.
\end{example}

Let $I_{X}:X\rightarrow X$ be the identity map defined as $I_{X}(x)=x$ for
all $x\in X$. Notice that the identity map satisfies the conditions $($\ref%
{thm1_S1}$)$ and $($\ref{thm1_S2}$)$ (resp. $($\ref{thm2_S1}$)$ and $($\ref%
{thm2_S2}$)$) in Theorem \ref{thm1} (resp. Theorem \ref{thm2}) for any
circle. Now we determine a condition which excludes the $I_{X}$ in Theorem %
\ref{thm1} and Theorem \ref{thm2}. We give the following theorem.

\begin{theorem}
\label{thm3} Let $(X,\mathcal{S})$ be an $S$-metric space, $T:X\rightarrow X$
be a self mapping having a fixed circle $C_{x_{0},r}^{S}$ and the mapping $%
\varphi $ be defined as $($\ref{phi mapping}$)$. The self-mapping $T$
satisfies the condition%
\begin{equation*}
(I_{S})\text{ \ \ \ \ \ \ \ \ }\mathcal{S}(x,x,Tx)\leq \frac{\varphi
(x)-\varphi (Tx)}{h}\text{,}
\end{equation*}%
for all $x\in X$ and some $h>2$ if and only if $T=I_{X}$.
\end{theorem}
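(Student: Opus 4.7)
The statement is a biconditional, so I plan to handle the two directions separately; the forward direction ($T=I_X \Longrightarrow (I_S)$) is essentially a substitution, while the reverse direction is where the hypothesis $h>2$ really does work.

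For the easy direction, I would just observe that if $T=I_X$ then $\mathcal{S}(x,x,Tx)=\mathcal{S}(x,x,x)=0$ and $\varphi(x)-\varphi(Tx)=0$, so the inequality $(I_S)$ holds trivially for any $h>2$. Note that the hypothesis that $C_{x_0,r}^S$ is a fixed circle is automatic in this direction.

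For the nontrivial direction, I would fix an arbitrary $x\in X$ and aim to show $Tx=x$; the fixed-circle hypothesis is actually not needed here, but the key maneuver is to bound $\varphi(x)-\varphi(Tx)$ from above using $\mathcal{S}(x,x,Tx)$. Concretely, I apply the triangle inequality in Definition~\ref{def1}(2) to $\mathcal{S}(x,x,x_0)$ with the auxiliary point $a=Tx$, and then invoke Lemma~\ref{lem1} to replace $\mathcal{S}(x_0,x_0,Tx)$ by $\mathcal{S}(Tx,Tx,x_0)$. This yields
\begin{equation*}
\varphi(x)=\mathcal{S}(x,x,x_0)\leq 2\mathcal{S}(x,x,Tx)+\mathcal{S}(Tx,Tx,x_0)=2\mathcal{S}(x,x,Tx)+\varphi(Tx),
\end{equation*}
so that $\varphi(x)-\varphi(Tx)\leq 2\mathcal{S}(x,x,Tx)$.

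Now I would plug this estimate into $(I_S)$ to obtain
\begin{equation*}
\mathcal{S}(x,x,Tx)\leq \frac{\varphi(x)-\varphi(Tx)}{h}\leq \frac{2}{h}\mathcal{S}(x,x,Tx),
\end{equation*}
i.e. $\left(1-\tfrac{2}{h}\right)\mathcal{S}(x,x,Tx)\leq 0$. Since $h>2$ forces $1-\tfrac{2}{h}>0$, this compels $\mathcal{S}(x,x,Tx)=0$, hence $Tx=x$ by Definition~\ref{def1}(1). Because $x$ was arbitrary, $T=I_X$. There is really no serious obstacle in this proof; the only thing one must notice is that the factor $2$ arising from the $S$-metric triangle inequality is exactly what dictates the threshold $h>2$ in the hypothesis, so the constant in $(I_S)$ is sharp for the argument to go through.
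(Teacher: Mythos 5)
Your proposal is correct and follows essentially the same route as the paper: the key step in both is the $S$-metric triangle inequality with auxiliary point $Tx$ combined with Lemma \ref{lem1}, giving $\varphi(x)-\varphi(Tx)\leq 2\mathcal{S}(x,x,Tx)$, after which $(I_S)$ and $h>2$ force $\mathcal{S}(x,x,Tx)=0$. The converse direction is handled identically (trivially) in both.
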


\begin{proof}
Let $x\in X$ be an arbitrary element. Then using the inequality $(I_{S})$,
Lemma \ref{lem1} and triangle inequality, we obtain%
\begin{eqnarray*}
h\mathcal{S}(x,x,Tx) &\leq &\varphi (x)-\varphi (Tx) \\
&=&\mathcal{S}(x,x,x_{0})-\mathcal{S}(Tx,Tx,x_{0}) \\
&\leq &2\mathcal{S}(x,x,Tx)+\mathcal{S}(Tx,Tx,x_{0})-\mathcal{S}(Tx,Tx,x_{0})
\\
&=&2\mathcal{S}(x,x,Tx)
\end{eqnarray*}%
and so%
\begin{equation*}
(h-2)\mathcal{S}(x,x,Tx)\leq 0\text{.}
\end{equation*}%
Since $h>2$ it should be $\mathcal{S}(x,x,Tx)=0$ and so $Tx=x$.
Consequently, we obtain $T=I_{X}$.

Conversely, it is clear that the identity map $I_{X}$ satisfies the
condition $(I_{S})$.
\end{proof}

\begin{remark}
\label{rem4} $1)$ If a self-mapping $T$, which has a fixed circle, satisfies
the conditions $($\ref{thm1_S1}$)$ and $($\ref{thm1_S2}$)$ $($resp. $($\ref%
{thm2_S1}$)$ and $($\ref{thm2_S2}$))$ in Theorem \ref{thm1} $($resp. Theorem %
\ref{thm2}$)$ but does not satisfy the condition $(I_{S})$ in Theorem \ref%
{thm3} then the self-mapping $T$ can not be identity map.

$2)$ If an $S$-metric is generated by any metric $d$, then Theorem \ref{thm3}
can be used on the corresponding metric space.
\end{remark}

\subsection{\textbf{The uniqueness of fixed circles}}

In this section we investigate the uniqueness of fixed circles obtained in
the existence theorems. Let $(X,S)$ be an $S$-metric space. For any given
circles $C_{x_{0},r}^{S}$ and $C_{x_{1},\rho }^{S}$ on $X$, we notice that
there exists at least one self-mapping $T$ of $X$ such that $T$ fixes the
circles $C_{x_{0},r}^{S}$, $C_{x_{1},\rho }^{S}$. Indeed let us define the
mappings $\varphi _{1},\varphi _{2}:X\rightarrow \lbrack 0,\infty )$ as%
\begin{equation*}
\varphi _{1}(x)=\mathcal{S}(x,x,x_{0})
\end{equation*}%
and%
\begin{equation*}
\varphi _{2}(x)=\mathcal{S}(x,x,x_{1})\text{,}
\end{equation*}%
for all $x\in X$. If we define the self-mapping $T:X\rightarrow X$ as%
\begin{equation*}
Tx=\left\{
\begin{array}{ccc}
x & \text{;} & x\in C_{x_{0},r}^{S}\cup C_{x_{1},\rho }^{S} \\
\alpha & \text{;} & \text{otherwise}%
\end{array}%
\right. \text{,}
\end{equation*}%
for all $x\in X$, where $\alpha $ is a constant satisfying $S(\alpha ,\alpha
,x_{0})\neq r$ and $S(\alpha ,\alpha ,x_{1})\neq \rho $, it can be easily
that the self-mapping $T:X\rightarrow X$ satisfies the conditions $($\ref%
{thm1_S1}$)$ and $($\ref{thm1_S2}$)$ in Theorem \ref{thm1} $($resp. $($\ref%
{thm2_S1}$)$ and $($\ref{thm2_S2}$)$ in Theorem \ref{thm2}$)$ for the
circles $C_{x_{0},r}^{S}$ and $C_{x_{1},\rho }^{S}$ using the mappings $%
\varphi _{1}$ and $\varphi _{2}$, respectively. Hence $T$ fixes both of the
circles $C_{x_{0},r}^{S}$ and $C_{x_{1},\rho }^{S}$. By this way, the number
of fixed circles can be extended to any positive integer $n$ using the same
arguments.

In the following example, the self-mapping $T_{9}$ has two fixed circle.

\begin{example}
\label{exm12} Let $X=%
\mathbb{R}
$ and $(X,\mathcal{S})$ be an $S$-metric space with the $S$-metric defined
in Example \ref{exm7}. Let us consider the circles $C_{0,2}^{S}$, $%
C_{0,4}^{S}$ and define the self-mapping $T_{9}:%
\mathbb{R}
\rightarrow
\mathbb{R}
$ as%
\begin{equation*}
T_{9}x=\left\{
\begin{array}{ccc}
x & ; & x\in \{-2,-1,1,2\} \\
\alpha & ; & \text{otherwise}%
\end{array}%
\right. \text{,}
\end{equation*}%
for all $x\in X$ where $\alpha \in X$. Then the conditions $($\ref{thm1_S1}$%
) $ and $($\ref{thm1_S2}$)$ are satisfied by $T_{9}$ for the circles $%
C_{0,2}^{S}$ and $C_{0,4}^{S}$, respectively. Consequently, $C_{0,2}^{S}$
and $C_{0,4}^{S}$ are the fixed circles of $T_{9}$.
\end{example}

Now we investigate uniqueness conditions for the fixed circles in Theorem %
\ref{thm1} using Rhoades' contractive condition on $S$-metric spaces.

\begin{theorem}
\label{thm4} Let $(X,\mathcal{S})$ be an $S$-metric space and $%
C_{x_{0},r}^{S}$ be any circle on $X$. Let $T:X\rightarrow X$ be a
self-mapping satisfying the conditions $($\ref{thm1_S1}$)$ and $($\ref%
{thm1_S2}$)$ given in Theorem \ref{thm1}. If the contractive condition%
\begin{eqnarray}
\mathcal{S}(Tx,Tx,Ty) &<&\max \{\mathcal{S}(x,x,y),\mathcal{S}(Tx,Tx,x),%
\mathcal{S}(Ty,Ty,y),  \label{Rhoades} \\
&&\mathcal{S}(Ty,Ty,x),\mathcal{S}(Tx,Tx,y)\}\text{,}  \notag
\end{eqnarray}%
is satisfied for all $x\in C_{x_{0},r}^{S}$, $y\in X\setminus
C_{x_{0},r}^{S} $ by $T$, then $C_{x_{0},r}^{S}$ is the unique fixed circle
of $T$.
\end{theorem}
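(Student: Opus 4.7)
My plan is a short contradiction argument. Suppose $T$ has another fixed circle $C_{x_{1},\rho }^{S}$ distinct from $C_{x_{0},r}^{S}$; the first task is to extract a point $y\in C_{x_{1},\rho }^{S}$ that does not lie on $C_{x_{0},r}^{S}$, so that $Ty=y$ (since $y$ is on a fixed circle) and $\mathcal{S}(y,y,x_{0})\neq r$. Pairing this $y$ with an arbitrary $x\in C_{x_{0},r}^{S}$, Theorem~\ref{thm1} (applied via the already assumed hypotheses (\ref{thm1_S1}) and (\ref{thm1_S2})) gives $Tx=x$, and $x\neq y$ because the two circles are disjoint at~$y$.

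The second step is to plug the pair $(x,y)$ into (\ref{Rhoades}) and simplify every term using $Tx=x$, $Ty=y$, axiom (1) of Definition~\ref{def1}, and the symmetry Lemma~\ref{lem1}. The left-hand side collapses to $\mathcal{S}(Tx,Tx,Ty)=\mathcal{S}(x,x,y)$. On the right-hand side, $\mathcal{S}(Tx,Tx,x)=\mathcal{S}(Ty,Ty,y)=0$, while $\mathcal{S}(Ty,Ty,x)=\mathcal{S}(y,y,x)=\mathcal{S}(x,x,y)$ and $\mathcal{S}(Tx,Tx,y)=\mathcal{S}(x,x,y)$, so the maximum is exactly $\mathcal{S}(x,x,y)$. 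Inequality (\ref{Rhoades}) therefore forces $\mathcal{S}(x,x,y)<\mathcal{S}(x,x,y)$, which is impossible. Hence no second fixed circle can exist.

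The only real obstacle is a conceptual one: guaranteeing that a hypothetically distinct fixed circle $C_{x_{1},\rho }^{S}$ actually supplies a fixed point $y$ off $C_{x_{0},r}^{S}$. This is immediate whenever the two circles differ as subsets of $X$ and $C_{x_{1},\rho }^{S}\not\subseteq C_{x_{0},r}^{S}$, which is the content of ``distinct fixed circles'' in the intended reading; once such a $y$ is in hand, everything else is bookkeeping driven by Lemma~\ref{lem1} and the fixed-point properties $Tx=x$, $Ty=y$.
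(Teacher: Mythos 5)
Your proposal is correct and follows essentially the same route as the paper: assume a second fixed circle $C_{x_{1},\rho}^{S}$, take a cross-pair $x\in C_{x_{0},r}^{S}$, $y\in C_{x_{1},\rho}^{S}$ of fixed points, and derive the contradiction $\mathcal{S}(x,x,y)<\mathcal{S}(x,x,y)$ from (\ref{Rhoades}). In fact you are slightly more careful than the paper on the one delicate point: the contractive condition is only hypothesized for $y\in X\setminus C_{x_{0},r}^{S}$, and you explicitly select $y$ off $C_{x_{0},r}^{S}$ (flagging the degenerate case $C_{x_{1},\rho}^{S}\subseteq C_{x_{0},r}^{S}$), whereas the paper only requires $x\neq y$.
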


\begin{proof}
Suppose that there exist two fixed circles $C_{x_{0},r}^{S}$ and $%
C_{x_{1},\rho }^{S}$ of the self-mapping $T$, that is, $T$ satisfies the
conditions $($\ref{thm1_S1}$)$ and $($\ref{thm1_S2}$)$ for each circles $%
C_{x_{0},r}^{S}$ and $C_{x_{1},\rho }^{S}$. Let $x\in C_{x_{0},r}^{S}$ and $%
y\in C_{x_{1},\rho }^{S}$ be arbitrary points with $x\neq y$. Using the
contractive condition $($\ref{Rhoades}$)$ we obtain%
\begin{eqnarray*}
\mathcal{S}(x,x,y) &=&\mathcal{S}(Tx,Tx,Ty)<\max \{\mathcal{S}(x,x,y),%
\mathcal{S}(Tx,Tx,x),\mathcal{S}(Ty,Ty,y), \\
&&\mathcal{S}(Ty,Ty,x),\mathcal{S}(Tx,Tx,y)\} \\
&=&\mathcal{S}(x,x,y)\text{,}
\end{eqnarray*}%
which is a contradiction. Hence it should be $x=y$. Consequently, $%
C_{x_{0},r}^{S}$ is the unique fixed circle of $T$.
\end{proof}

In the following example we show that $C_{x_{0},r}^{S}$ is not necessarily
unique in Theorem \ref{thm2}.

\begin{example}
\label{exm15} Let $(X,\mathcal{S})$ be an $S$-metric space and $%
C_{x_{1},r_{1}}$,$\cdots $, $C_{x_{n},r_{n}}$ be any circles on $X$. Let us
define the self-mapping $T_{10}:X\rightarrow X$ as%
\begin{equation*}
T_{10}x=\left\{
\begin{array}{ccc}
x & \text{;} & x\in \bigcup\limits_{i=1}^{n}C_{x_{i},r_{i}} \\
x_{0} & \text{;} & \text{otherwise}%
\end{array}%
\right. \text{,}
\end{equation*}%
for all $x\in X$, where $x_{0}$ is a constant in $X$. Then it can be easily
checked that the conditions $($\ref{thm2_S1}$)$ and $($\ref{thm2_S2}$)$ are
satisfied by $T_{10}$ for the circles $C_{x_{1},r_{1}}$,$\cdots $, $%
C_{x_{n},r_{n}}$, respectively. Consequently, the circles $C_{x_{1},r_{1}}$,$%
\cdots $, $C_{x_{n},r_{n}}$ are fixed circles of $T_{10}$. Notice that these
circles do not have to be disjoint.
\end{example}

Now we give the following uniqueness theorem for the fixed circles in
Theorem \ref{thm2} using the notion of diameter on $S$-metric spaces.

\begin{theorem}
\label{thm5} Let $(X,\mathcal{S})$ be an $S$-metric space, $C_{x_{0},r}^{S}$
be any circle on $X$, $U_{x}=\{T^{n}x:n\in
\mathbb{N}
\}$, $U_{y}=\{T^{n}y:n\in
\mathbb{N}
\}$, $diam\{U_{x}\}<\infty $ and $diam\{U_{y}\}<\infty $. Let $%
T:X\rightarrow X$ be a self-mapping satisfying the conditions $($\ref%
{thm2_S1}$)$ and $($\ref{thm2_S2}$)$ given in Theorem \ref{thm2}. If the
contractive condition%
\begin{equation}
\mathcal{S}(Tx,Tx,Ty)<diam\{U_{x}\cup U_{y}\}\text{,}  \label{diameter}
\end{equation}%
is satisfied for all $x\in C_{x_{0},r}^{S}$, $y\in X\setminus
C_{x_{0},r}^{S} $ by $T$, then $C_{x_{0},r}^{S}$ is the unique fixed circle
of $T$.
\end{theorem}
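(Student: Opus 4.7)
The plan is a contradiction argument that runs parallel to the proof of Theorem \ref{thm4}, with the Rhoades-type maximum replaced by the diameter of a two-point orbit set. Suppose for contradiction that $T$ also possesses a second fixed circle $C_{x_{1},\rho }^{S}\neq C_{x_{0},r}^{S}$. Since the two circles are distinct as subsets of $X$, I can choose a point $y\in C_{x_{1},\rho }^{S}\setminus C_{x_{0},r}^{S}$ together with any $x\in C_{x_{0},r}^{S}$; automatically $x\neq y$, and the pair lies in the regime $x\in C_{x_{0},r}^{S}$, $y\in X\setminus C_{x_{0},r}^{S}$ in which the hypothesis $(\ref{diameter})$ is assumed to hold.

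The key observation is that, because $x$ and $y$ lie on fixed circles of $T$, every iterate collapses: $T^{n}x=x$ and $T^{n}y=y$ for all $n\in \mathbb{N}$. Hence $U_{x}=\{x\}$ and $U_{y}=\{y\}$, so $U_{x}\cup U_{y}=\{x,y\}$. Invoking Lemma \ref{lem1} to identify $\mathcal{S}(x,x,y)$ with $\mathcal{S}(y,y,x)$, the diameter of this two-element set reduces to
$$diam\{U_{x}\cup U_{y}\}=\mathcal{S}(x,x,y).$$
In particular the finiteness of $diam\{U_{x}\}$ and $diam\{U_{y}\}$ required in the statement is trivially satisfied.

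Substituting $Tx=x$ and $Ty=y$ into $(\ref{diameter})$ then yields
$$\mathcal{S}(x,x,y)=\mathcal{S}(Tx,Tx,Ty)<diam\{U_{x}\cup U_{y}\}=\mathcal{S}(x,x,y),$$
the desired contradiction; hence no distinct second fixed circle can exist, and $C_{x_{0},r}^{S}$ must be the unique fixed circle of $T$. The step I anticipate to be mildly delicate is the choice of $y$: one must verify that a hypothetical second fixed circle actually furnishes a point outside $C_{x_{0},r}^{S}$ so that $(\ref{diameter})$ is applicable. This is immediate once one interprets distinctness of the two circles in the set-theoretic sense already implicit in Definition \ref{def4}; the rest of the argument is a routine diameter-contraction computation.
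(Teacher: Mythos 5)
Your proof is correct and follows essentially the same contradiction argument as the paper's: pick $x$ on the first circle and $y$ on the hypothetical second one, use $Tx=x$, $Ty=y$ to reduce $(\ref{diameter})$ to $\mathcal{S}(x,x,y)<\mathcal{S}(x,x,y)$. You are in fact slightly more careful than the paper, since you justify why $diam\{U_{x}\cup U_{y}\}=\mathcal{S}(x,x,y)$ (the orbits are singletons) and you explicitly choose $y\in C_{x_{1},\rho}^{S}\setminus C_{x_{0},r}^{S}$ so that the contractive hypothesis is actually applicable, whereas the paper takes $y$ on the second circle with only $x\neq y$.
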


\begin{proof}
Assume that there exist two fixed circles $C_{x_{0},r}^{S}$ and $%
C_{x_{1},\rho }^{S}$ of the self-mapping $T$, that is, $T$ satisfies the
conditions $($\ref{thm2_S1}$)$ and $($\ref{thm2_S2}$)$ for each circles $%
C_{x_{0},r}^{S}$ and $C_{x_{1},\rho }^{S}$. Let $x\in C_{x_{0},r}^{S}$ and $%
y\in C_{x_{1},\rho }^{S}$ be arbitrary points with $x\neq y$. Using the
contractive condition $($\ref{diameter}$)$ we obtain%
\begin{equation*}
\mathcal{S}(x,x,y)=\mathcal{S}(Tx,Tx,Ty)<diam\{U_{x}\cup U_{y}\}=\mathcal{S}%
(x,x,y),
\end{equation*}%
which is a contradiction. Hence it should be $x=y$. Consequently, $%
C_{x_{0},r}^{S}$ is the unique fixed circle of $T$.
\end{proof}

\subsection{Infinity of fixed circles}

In this section we give a new approach to obtain fixed-circle results. To do
this, let us denote by $R_{S}(x,y)$ the right side of the inequality $(S25)$%
. Using the number $R_{S}(x,y)$, we obtain the following theorem. This
theorem generates many (finite or infinite) fixed circle for a given
self-mapping.

\begin{theorem}
\label{thm6} Let $(X,\mathcal{S})$ be an $S$-metric space, $T:X\rightarrow X$
be a self-mapping and $r=\min \left\{ \mathcal{S}(Tx,Tx,x):Tx\neq x\right\} $%
. If there exists a point $x_{0}\in X$ satisfying%
\begin{equation}
\mathcal{S}(x,x,Tx)<R_{S}(x,x_{0})\text{ for all }x\in X\text{ when }%
\mathcal{S}(Tx,Tx,x)>0  \label{eqn1}
\end{equation}%
and%
\begin{equation}
\mathcal{S}(Tx,Tx,x_{0})=r\text{ for all }x\in C_{x_{0},r}^{S}\text{,}
\label{eqn2}
\end{equation}%
then $C_{x_{0},r}^{S}$ is a fixed circle of $T$. The self-mapping $T$ also
fixes the closed ball $B_{S}[x_{0},r]$.
\end{theorem}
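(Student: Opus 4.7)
The plan is to establish the claim in three stages: first that $x_{0}$ itself is fixed, next that every point of the circle $C_{x_{0},r}^{S}$ is fixed, and finally that fixedness propagates to the interior of the closed ball $B_{S}[x_{0},r]$. The engine throughout is the strict inequality in $(\ref{eqn1})$, combined with the elementary observation that the very definition of $r$ forces $\mathcal{S}(Tz,Tz,z)\geq r$ whenever $Tz\neq z$.

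For the first stage I would substitute $x=x_{0}$ into $(\ref{eqn1})$. If $Tx_{0}\neq x_{0}$, the five entries of $R_{S}(x_{0},x_{0})$ collapse: four of them equal $\mathcal{S}(Tx_{0},Tx_{0},x_{0})$ and one equals $0$, so Lemma~\ref{lem1} reduces the right-hand side to $\mathcal{S}(x_{0},x_{0},Tx_{0})$, contradicting the strict inequality. Hence $Tx_{0}=x_{0}$.

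For the second stage I would take an arbitrary $x\in C_{x_{0},r}^{S}$ and assume for contradiction that $Tx\neq x$, so that $\mathcal{S}(Tx,Tx,x)\geq r$. Evaluating the five terms in $R_{S}(x,x_{0})$ one by one gives: $\mathcal{S}(x,x,x_{0})=r$ by choice of $x$; $\mathcal{S}(Tx_{0},Tx_{0},x_{0})=0$ by Stage~1; $\mathcal{S}(Tx_{0},Tx_{0},x)=\mathcal{S}(x_{0},x_{0},x)=r$ via Lemma~\ref{lem1}; $\mathcal{S}(Tx,Tx,x_{0})=r$ by $(\ref{eqn2})$; and $\mathcal{S}(Tx,Tx,x)\geq r$ by the minimality of $r$. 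Consequently $R_{S}(x,x_{0})=\mathcal{S}(Tx,Tx,x)=\mathcal{S}(x,x,Tx)$, which contradicts $(\ref{eqn1})$. Therefore $Tx=x$ for every point on the circle.

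For the ball, I would take $y\in B_{S}[x_{0},r]$ with $0<\mathcal{S}(y,y,x_{0})<r$ and suppose $Ty\neq y$, so that $\mathcal{S}(Ty,Ty,y)\geq r$. Using $Tx_{0}=x_{0}$, the quantity $R_{S}(y,x_{0})$ simplifies to $\max\{\mathcal{S}(y,y,x_{0}),\mathcal{S}(Ty,Ty,y),\mathcal{S}(Ty,Ty,x_{0})\}$. Since $\mathcal{S}(y,y,x_{0})<r\leq \mathcal{S}(Ty,Ty,y)$, the only way $(\ref{eqn1})$ can yield a strict inequality is $\mathcal{S}(Ty,Ty,x_{0})>\mathcal{S}(Ty,Ty,y)$. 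I expect this residual case to be the main technical obstacle: it says that the image of an interior point is pushed strictly outside the ball, and ruling it out appears to require an auxiliary ingredient beyond $(\ref{eqn1})$ and $(\ref{eqn2})$ alone, such as iterating $(\ref{eqn1})$ along the forward orbit $y,Ty,T^{2}y,\ldots$ or combining it with the triangle inequality applied at a carefully chosen circle point lying between $y$ and $Ty$.
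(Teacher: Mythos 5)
Your first two stages reproduce the paper's argument exactly: the substitution $x=x_{0}$ into (\ref{eqn1}) to force $Tx_{0}=x_{0}$, and then the term-by-term evaluation of $R_{S}(x,x_{0})$ for $x\in C_{x_{0},r}^{S}$, using (\ref{eqn2}), Lemma~\ref{lem1} and the minimality of $r$ to collapse the maximum to $\mathcal{S}(Tx,Tx,x)=\mathcal{S}(x,x,Tx)$ and contradict the strict inequality. Both stages are correct; you are in fact more explicit than the paper in invoking $\mathcal{S}(Tx,Tx,x)\geq r$ to identify which term attains the maximum.

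For the closed ball you have put your finger on a genuine gap, but it is a gap in the paper's own proof rather than a missing idea that the paper supplies. The paper disposes of the ball by asserting that for $x\in C_{x_{0},\rho}^{S}$ with $\rho<r$ ``the similar arguments used in the above'' give $Tx=x$. They do not: the step that neutralizes the term $\mathcal{S}(Tx,Tx,x_{0})$ in the maximum for circle points is hypothesis (\ref{eqn2}), which pins that quantity to $r\leq\mathcal{S}(Tx,Tx,x)$, and (\ref{eqn2}) is assumed only on $C_{x_{0},r}^{S}$. For an interior point $y$ with $Ty\neq y$, nothing in (\ref{eqn1}) or (\ref{eqn2}) excludes $\mathcal{S}(Ty,Ty,x_{0})>\mathcal{S}(Ty,Ty,y)$, which is exactly the residual case you isolate, and in that case (\ref{eqn1}) holds without any contradiction. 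So your proposal establishes the fixed-circle assertion completely, and correctly diagnoses that the closed-ball assertion does not follow from the stated hypotheses by the intended argument; neither your suggested remedies (iterating along the orbit, interpolating a circle point) nor anything in the paper closes this case. A clean fix is to strengthen the hypothesis so that $\mathcal{S}(Tx,Tx,x_{0})\leq\mathcal{S}(Tx,Tx,x)$ is guaranteed for all $x\in B_{S}[x_{0},r]$ with $Tx\neq x$; note that in the paper's Example~\ref{exm1} the issue is invisible because $T$ is already the identity on the whole ball.
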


\begin{proof}
Let $x\in C_{x_{0},r}^{S}$ and $Tx\neq x$. Then using the inequality (\ref%
{eqn1}) and Lemma \ref{lem1}, we get%
\begin{eqnarray}
\mathcal{S}(x,x,Tx) &<&R_{S}(x,x_{0})  \notag \\
&=&\max \left\{
\begin{array}{c}
\mathcal{S}(x,x,x_{0}),\mathcal{S}(Tx,Tx,x),\mathcal{S}(Tx_{0},Tx_{0},x_{0}),
\\
\mathcal{S}(Tx_{0},Tx_{0},x),\mathcal{S}(Tx,Tx,x_{0})%
\end{array}%
\right\} \text{.}  \label{eqn3}
\end{eqnarray}%
At first, using the inequality (\ref{eqn3}) and Lemma \ref{lem1}, we show $%
Tx_{0}=x_{0}$. Suppose that $Tx_{0}\neq x_{0}$. For $x=x_{0}$, we obtain%
\begin{eqnarray*}
\mathcal{S}(x_{0},x_{0},Tx_{0}) &<&R_{S}(x_{0},x_{0}) \\
&=&\max \left\{
\begin{array}{c}
\mathcal{S}(x_{0},x_{0},x_{0}),\mathcal{S}(Tx_{0},Tx_{0},x_{0}),\mathcal{S}%
(Tx_{0},Tx_{0},x_{0}), \\
\mathcal{S}(Tx_{0},Tx_{0},x_{0}),\mathcal{S}(Tx_{0},Tx_{0},x_{0})%
\end{array}%
\right\} \\
&=&\mathcal{S}(Tx_{0},Tx_{0},x_{0})=\mathcal{S}(x_{0},x_{0},Tx_{0})\text{,}
\end{eqnarray*}%
a contradiction. It should be $Tx_{0}=x_{0}$. Then by the inequality (\ref%
{eqn3}), the condition (\ref{eqn2}) and Lemma \ref{lem1}, we have%
\begin{eqnarray*}
\mathcal{S}(x,x,Tx) &<&\max \left\{
\begin{array}{c}
\mathcal{S}(x,x,x_{0}),\mathcal{S}(Tx,Tx,x),\mathcal{S}(x_{0},x_{0},x_{0}),
\\
\mathcal{S}(x_{0},x_{0},x),\mathcal{S}(Tx,Tx,x_{0})%
\end{array}%
\right\} \\
&=&\max \left\{ r,\mathcal{S}(Tx,Tx,x)\right\} =\mathcal{S}(Tx,Tx,x)=%
\mathcal{S}(x,x,Tx)\text{,}
\end{eqnarray*}%
a contradiction. Therefore we get $Tx=x$, that is, $C_{x_{0},r}^{S}$ is a
fixed circle of $T$.

Finally we prove that $T$ fixes the closed ball $B_{S}[x_{0},r]$. To do
this, we show that $T$ fixes any circle $C_{x_{0},\rho }^{S}$ with $\rho <r$%
. Let $x\in C_{x_{0},\rho }^{S}$ and $Tx\neq x$. From the similar arguments
used in the above, we have $Tx=x$.
\end{proof}

We give the following example.

\begin{example}
\label{exm1} Let $X=%
\mathbb{R}
$ be the usual $S$-metric space. Let us define the self-mapping $T:%
\mathbb{R}
\rightarrow
\mathbb{R}
$ as%
\begin{equation*}
Tx=\left\{
\begin{array}{ccc}
x & ; & \left\vert x\right\vert <3 \\
x+2 & ; & \left\vert x\right\vert \geq 3%
\end{array}%
\right. \text{,}
\end{equation*}%
for all $x\in
\mathbb{R}
$. The self-mapping $T$ satisfies the conditions of Theorem \ref{thm6} with $%
x_{0}=0$. Indeed, we get%
\begin{equation*}
\mathcal{S}(x,x,Tx)=2\left\vert x-Tx\right\vert =4>0\text{,}
\end{equation*}%
for all $x\in
\mathbb{R}
$ such that $\left\vert x\right\vert \geq 3$. Then we have%
\begin{eqnarray*}
R_{S}(x,0) &=&\max \left\{ \mathcal{S}(x,x,0),\mathcal{S}(Tx,Tx,x),\mathcal{%
S(}0,0,0\mathcal{)},\mathcal{S(}0,0,x\mathcal{)},\mathcal{S(}Tx,Tx,0\mathcal{%
)}\right\} \\
&=&\max \left\{ 2\left\vert x\right\vert ,4,0,2\left\vert x\right\vert
,2\left\vert x+2\right\vert \right\} \\
&=&\max \left\{ 2\left\vert x\right\vert ,2\left\vert x+2\right\vert \right\}
\end{eqnarray*}%
and so%
\begin{equation*}
\mathcal{S}(x,x,Tx)<R_{S}(x,0)\text{.}
\end{equation*}%
Therefore the condition $($\ref{eqn1}$)$ is satisfied. It can be easily seen
that the condition $($\ref{eqn2}$)$ is satisfied by the definition of $T$.
We also obtain%
\begin{equation*}
r=\min \left\{ \mathcal{S}(Tx,Tx,x):Tx\neq x\right\} =4\text{.}
\end{equation*}%
Consequently, $T$ fixes the circle $C_{0,4}^{S}=\left\{ x\in
\mathbb{R}
:\left\vert x\right\vert =2\right\} $ and the closed ball $%
B_{S}[0,4]=\left\{ x\in
\mathbb{R}
:\left\vert x\right\vert \leq 2\right\} $.
\end{example}

\begin{remark}
\label{rem5} $1)$ Notice that the condition $($\ref{eqn2}$)$ guarantees that
$Tx\in C_{x_{0},r}^{S}$ for each $x\in C_{x_{0},r}^{S}$ and so $%
T(C_{x_{0},r}^{S})\subset C_{x_{0},r}^{S}$.

$2)$ The self-mapping $T$ defined in Example \ref{exm1} has other fixed
circles. Theorem \ref{thm6} gives us some of these circles.

$3)$ A self-mapping $T$ can fix infinitely many circles $($see Example \ref%
{exm1}$)$.
\end{remark}

The converse statement is not always true as seen following example.

\begin{example}
\label{exm2} Let $x_{0}\in X$ be any point. If we define the self-mapping $%
T:X\rightarrow X$ as%
\begin{equation*}
Tx=\left\{
\begin{array}{ccc}
x & ; & x\in B_{S}[x_{0},\mu ] \\
x_{0} & ; & x\notin B_{S}[x_{0},\mu ]%
\end{array}%
\right. \text{,}
\end{equation*}%
for all $x\in X$ with $\mu >0$, then $T$ does not satisfies the condition $($%
\ref{eqn1}$)$, but $T$ fixes every circle $C_{x_{0},\rho }^{S}$ with $\rho
\leq \mu $.
\end{example}

\end{document}